\theoremstyle{plain}
\newtheorem*{prop*}{Proposition}
\newtheorem*{thm*}{Theorem}
\newtheorem*{cor*}{Corollary}
\newtheorem{thmintro}{Theorem}
\newtheorem{corintro}[thmintro]{Corollary}
\newtheorem*{convention*}{Convention}
\theoremstyle{definition}
\newtheorem*{defn*}{Definition}
\newtheorem*{rem*}{Remark}
\newtheorem*{scholium*}{Scholium}
\newtheorem*{question*}{Question}
\newtheorem*{ack*}{Acknowledgements}
\newtheorem*{example*}{Example}
\numberwithin{equation}{section}
\newcommand{\ro}{\varrho}
\newcommand{\RR}{\mathbf{R}}
\newcommand{\bb}{\overline{B}}
\newcommand{\bid}{^{**}}
\newcommand{\inv}{^{-1}}
\newcommand{\se}{\subseteq}
\begin{document}
\title{A fixed point theorem for $L^1$ spaces}
\author[Bader, Gelander and Monod]{U. Bader, T. Gelander and N. Monod}
\address{Technion, Israel; Hebrew University, Israel; EPFL, Switzerland}
\thanks{Supported in part by the Israel Science Foundation, the European Research Council and the Swiss National Science Foundation.}
\begin{abstract}
We prove a fixed point theorem for a family of Banach spaces including notably $L^1$ and its non-commutative analogues. Several applications are given, e.g. the optimal solution to the ``derivation problem'' studied since the 1960s.
\end{abstract}
\maketitle

%===================================================================================================
%===================================================================================================
\section{Introduction}

Andr\'es Navas asked us if there is a fixed point theorem for all isometries of $L^1$ that preserve a given bounded set. Unlike many known cases where a geometric argument applies, there is a fundamental obstruction in $L^1$: \itshape any infinite group $G$ admits a fixed-point-free isometric action on a bounded convex subset of $L^1$\upshape. Example: the $G$-action on the affine subspace of summable functions of sum one on $G$. This action is fixed-point-free and preserves the closed convex bounded set of non-negative functions. The obvious (and only) fixed point, zero, is outside the space.

\medskip
Thus, we have to search for fixed points possibly outside the convex set, indeed outside the affine subspace it spans. We shall do this more generally for any \emph{L-embedded} Banach space $V$, that is, a space whose bidual can be decomposed as $V\bid=V\oplus_1V_0$ for some $V_0\se V\bid$ (and $\oplus_1$ indicates that the norm is the sum of the norms on $V$ and $V_0$). Recall that $L^1$ is L-embedded by the Yosida--Hewitt decomposition~\cite{Yosida-Hewitt} and that this holds more generally for the predual of any von Neumann algebra~\cite[III.2.14]{TakesakiI}; in particular, for the dual of any C*-algebra.

\begin{thmintro}\label{thm:main}
Let $A$ be a non-empty bounded subset of an $L$-embedded Banach space $V$.

Then there is a point in $V$ fixed by every isometry of $V$ preserving $A$. Moreover, one can choose a fixed point which minimises $\sup_{a\in A}\|v-a\|$ over all $v\in V$.
\end{thmintro}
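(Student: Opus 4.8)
The plan is to produce the fixed point as a \emph{circumcentre} of $A$: I would first locate a circumcentre in the bidual (where compactness is available) and then use the L-structure to pull it back into $V$. To begin, since the maps in question are surjective isometries of the normed space $V$, the Mazur--Ulam theorem shows that the group $G$ of isometries preserving $A$ consists of affine maps $g = T_g + b_g$ with $T_g$ a surjective linear isometry. Each such $g$ extends to an affine, weak-* continuous isometry of $V\bid$: take the bitranspose $T_g\bid$ (a weak-* homeomorphic linear isometry of $V\bid$ restricting to $T_g$ on $V$) and retain the translation. Thus $G$ acts by affine weak-* homeomorphisms on $V\bid$. Now set $r(\xi) = \sup_{a\in A}\|\xi - a\|$ for $\xi\in V\bid$. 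As a supremum of convex, weak-* lower semicontinuous functions, $r$ is convex and weak-* lsc, and it is coercive because $A$ is bounded; hence its sublevel sets are weak-* compact by Alaoglu and $r$ attains its infimum $\rho$. The circumcentre set $C = \{\xi : r(\xi)=\rho\}$ is therefore non-empty, convex and weak-* compact, and since each $g$ is an isometry permuting $A$ one checks $r(g\xi)=r(\xi)$, so $C$ is $G$-invariant.

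The L-structure enters in pushing $C$ into $V$. Writing $\xi = P\xi + Q\xi$ with $P\xi\in V$ and $Q\xi\in V_0$, the inclusion $A\se V$ gives $\xi - a = (P\xi - a) + Q\xi$ with $P\xi - a\in V$, so the $\oplus_1$ decomposition yields $\|\xi - a\| = \|P\xi - a\| + \|Q\xi\|$ and hence $r(\xi) = r(P\xi) + \|Q\xi\|$. For $\xi\in C$ this reads $\rho = r(P\xi) + \|Q\xi\| \ge \rho + \|Q\xi\|$, forcing $Q\xi = 0$. Therefore $C\se V$; in particular $\rho = \min_{v\in V}\sup_{a\in A}\|v-a\|$, which will deliver the ``moreover'' clause once a fixed point is located inside $C$.

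The crux is to extract a single $G$-fixed point from the $G$-invariant set $C$, and the key observation is topological: on the subset $V\se V\bid$ the weak-* topology $\sigma(V\bid,V^*)$ restricts to the weak topology $\sigma(V,V^*)$, so the weak-* compact set $C$, now known to lie in $V$, is in fact \emph{weakly} compact in $V$. Consequently $C$ is a non-empty weakly compact convex set on which $G$ acts by affine, weakly continuous maps, and the action is noncontracting because the elements of $G$ are isometries: $\|gx - gy\| = \|x-y\| > 0$ whenever $x\neq y$. The Ryll--Nardzewski fixed point theorem then provides a common fixed point $v\in C$ for $G$; lying in $C$ it minimises $\sup_{a\in A}\|v-a\|$, completing both assertions.

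The step I expect to be genuinely delicate is the interface between the last two paragraphs. Minimising $r$ directly over $V$ need not work, since $V$ is typically non-reflexive and the infimum can escape to infinity; passing to $V\bid$ restores attainment but a priori only yields a centre in the bidual. Because $G$ may be non-amenable, no averaging argument is available, and the classical Brodskii--Milman/Kirk circumcentre argument is ruled out since $L^1$-type spaces fail to have normal structure. It is precisely L-embeddedness that resolves both difficulties at once: the $\oplus_1$ identity returns the circumcentre to $V$, and there the coincidence of the weak-* and weak topologies supplies the weak compactness that makes Ryll--Nardzewski applicable. I would therefore concentrate the care on verifying the weak-* lower semicontinuity and coercivity of $r$, the weak-* continuity of the extended action, and the precise hypotheses of Ryll--Nardzewski (affineness, weak continuity, and the noncontracting condition).
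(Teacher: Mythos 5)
Your proposal is correct and follows essentially the same route as the paper: locate the Chebyshev centre of $A$ in $V\bid$ using weak-* compactness, use the $\oplus_1$ decomposition to show the $V_0$-component of any centre vanishes so the centre set lies in $V$ and is weakly compact there, and then apply Ryll--Nardzewski. Your explicit verifications (Mazur--Ulam, weak-* lower semicontinuity of $r$, the noncontracting condition) are details the paper leaves implicit, but the argument is the same.
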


We recall that an isometric action of a group $G$ on a Banach space $V$ is given by a linear part and a cocycle $b:G\to V$. The cocycle is the orbital map of $0\in V$ and a fixed point $v$ corresponds to a trivialisation $b(g) = v - g.v$, where $g.v$ is the linear action. The above norm statement implies that one can arrange $\|v\|\leq \sup_g\|b(g)\|$ by considering $A=b(G)\ni 0$.

\medskip
As a special case (the ``commutative'' case), we recover the main theorem of~\cite{Losert08} due to Losert, but with an improved (indeed optimal) norm estimate:

\begin{corintro}[cf.~\cite{Losert08}]\label{cor:losert}
Let $G$ be a group acting by homeomorphisms on a locally compact space $X$. Then any bounded cocycle $b:G\to M(X)$ to the space of (finite Radon) measures on $X$ is trivial. More precisely, there is a measure $\mu$ with $\|\mu\|\leq \sup_{g\in G}\|b(g)\|$ such that $b(g) = \mu - g.\mu$ for all $g\in G$.\qed
\end{corintro}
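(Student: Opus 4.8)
The plan is to derive the corollary directly from Theorem~\ref{thm:main}, which I may assume. The space in question is $V=M(X)$, and the first point is that it is $L$-embedded: by Riesz representation $M(X)$ is the dual of the commutative C*-algebra $C_0(X)$, so the fact recalled in the introduction applies. Next I would turn the data into an isometric action preserving a bounded set. The $G$-action on $X$ by homeomorphisms induces a linear isometric action on $M(X)$ by push-forward (total variation is preserved), and combining it with the cocycle produces the affine maps
\[
\alpha(g)\colon M(X)\to M(X),\qquad \alpha(g)\mu=g.\mu+b(g).
\]
Each $\alpha(g)$ is a surjective affine isometry of $M(X)$, and the cocycle identity $b(gh)=b(g)+g.b(h)$ is exactly what makes $g\mapsto\alpha(g)$ a homomorphism into the isometry group.

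The set I would feed into Theorem~\ref{thm:main} is the orbit $A:=b(G)=\{b(g):g\in G\}$. It is bounded because $b$ is a bounded cocycle, and it contains $0=b(1)$. The crucial and essentially only verification in the deduction is that the isometries $\alpha(g)$ preserve $A$: indeed $\alpha(g)b(h)=g.b(h)+b(g)=b(gh)$, so $\alpha(g)$ permutes the orbit, and invertibility gives $\alpha(g)A=A$. Theorem~\ref{thm:main} now supplies a measure $\mu\in M(X)$ fixed by every $\alpha(g)$ and minimising $\sup_{a\in A}\|\mu-a\|$ over $M(X)$. Unwinding the fixed-point condition, $\alpha(g)\mu=\mu$ reads $g.\mu+b(g)=\mu$, that is $b(g)=\mu-g.\mu$, so $b$ is a coboundary. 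For the norm bound, minimality together with $0\in A$ gives
\[
\|\mu\|=\|\mu-0\|\le\sup_{a\in A}\|\mu-a\|\le\sup_{a\in A}\|0-a\|=\sup_{g\in G}\|b(g)\|,
\]
which is the asserted estimate.

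Thus the deduction itself is bookkeeping; the genuine obstacle is hidden inside Theorem~\ref{thm:main}, on which the whole corollary rests, so it is worth recording how I expect that to go. I would study the Chebyshev-radius functional $f(\xi)=\sup_{a\in A}\|\xi-a\|$ on the bidual $V\bid$: it is convex and weak-$*$ lower semicontinuous with weak-$*$ compact sublevel sets, hence attains its infimum $r$. The one place where $L$-embeddedness enters is the identity $f(\xi)=f(P\xi)+\|\xi-P\xi\|$, valid because for $a\in A\subseteq V$ the vector $\xi-a=(P\xi-a)+(\xi-P\xi)$ lies in $V\oplus_1V_0$; this forces every minimiser into $V$ and shows the minimiser set $C=\{f\le r\}$ is weak-$*$ compact and contained in $V$, hence \emph{weakly} compact. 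The group of isometries preserving $A$ then acts on the weakly compact convex set $C$ by weakly continuous affine maps, and does so isometrically, so it is noncontracting; the Ryll--Nardzewski theorem yields a common fixed point, which lies in $C$ and therefore realises the minimal radius. I expect this last passage --- upgrading weak-$*$ compactness of the minimiser set to weak compactness in $V$ and then invoking Ryll--Nardzewski, so that no amenability hypothesis on $G$ is needed --- to be the real crux underlying the corollary.
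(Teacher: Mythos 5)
Your deduction is correct and is essentially the paper's own: the authors likewise note that $M(X)=C_0(X)^*$ is the predual of a von Neumann algebra (hence L-embedded) and apply Theorem~\ref{thm:main} to $A=b(G)\ni 0$, with the affine action and the norm estimate handled exactly as you do. Your closing sketch of Theorem~\ref{thm:main} also matches the paper's argument (L-embeddedness forces the Chebyshev centre of $A$ into $V$, making it weakly compact, and Ryll--Nardzewski finishes).
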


\noindent
Indeed, $M(X)$ is the dual of the (commutative) C*-algebra $C_0(X)$ and hence the predual of a von Neumann algebra.

\medskip
Numerous consequences of Corollary~\ref{cor:losert} are listed in~\cite{Losert08}; let us only recall that it settles the so-called \emph{derivation problem} whose history began in the 1960's: If $G$ is a locally compact group, then any derivation from the convolution algebra $L^1(G)$ to $M(G)$ is inner. This is often phrased in terms of derivations ``of $L^1(G)$'' since any derivation $L^1(G)\to M(G)$ must range in $L^1(G)$ by Paul Cohen's factorisation theorem~\cite{Cohen59}. It also follows that any derivation of $M(G)$ is inner. Our norm estimate is stronger and in fact optimal by Remark~7.2(a) in~\cite{Losert08}.

\medskip
As observed by Uffe Haagerup, Theorem~\ref{thm:main} also yields a new
proof that all C*-algebras are weakly amenable, which was proved in~\cite{Haagerup83} using the Grothendieck--Haagerup--Pisier inequality. In fact, our theorem immediately implies that any continuous derivation from any normed algebra $A$ to a predual $M_*$ of a von Neumann algebra is inner as soon as $A$ is spanned by the elements represented as invertible isometries of $M_*$ (see the proof of the corollary below). In the particular case of C*-algebras, we obtain the following general statement.

\begin{corintro}\label{cor:uffe}
Let $A$ be a unital C*-algebra. Let $M_*$ be the predual of a von Neumann algebra. Assume $M_*$ is a Banach bimodule over $A$. Then any arbitrary derivation $D:A\to M_*$ is inner.

Moreover, we can choose $v\in M_*$ with $D(a)=v.a-a.v$ such that $\|v\| \leq \|D\|$.
\end{corintro}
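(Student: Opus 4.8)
The plan is to derive Corollary~\ref{cor:uffe} from Theorem~\ref{thm:main} by turning the derivation into a bounded cocycle and then extracting a fixed point. First I would recall that since $M_*$ is a Banach bimodule over the unital C*-algebra $A$, every unitary $u\in A$ acts on $M_*$ by the invertible bounded operator $\xi\mapsto u.\xi.u\inv$. A derivation $D$ satisfies the Leibniz rule, and a standard computation shows that the map $b(u):=D(u).u\inv$ (or an analogous normalisation) is a cocycle for this action: that is, $b(uw)=b(u)+u.b(w).u\inv$. The orbit $b(\mathrm{U}(A))$ of this cocycle under the unitary group is bounded, since $\|b(u)\|\le\|D\|\,\|u\|\,\|u\inv\|=\|D\|$ because unitaries and their inverses are isometries with norm one; this bound is exactly what produces the claimed estimate $\|v\|\le\|D\|$.

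The key point is that the linear action of each unitary $u$ on $M_*$ is an \emph{isometry}, not merely an invertible operator. This is where I would use that $M_*$ is the predual of a von Neumann algebra $M$: the dual action of $u$ on $M$ by $x\mapsto uxu\inv$ is a $*$-automorphism, hence an isometry of $M$, and its predual (which is the action on $M_*$) is therefore an isometry of $M_*$ as well. Consequently the affine action of the unitary group $\mathrm{U}(A)$ on $M_*$ given by $u\cdot\xi = u.\xi.u\inv + b(u)$ is by affine isometries, and it preserves the bounded set $A_0:=\overline{b(\mathrm{U}(A))}$ (or its closed convex hull) since the cocycle identity makes the orbit of $0$ land exactly in this set. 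Theorem~\ref{thm:main} then supplies a point $v\in M_*$ fixed by every isometry preserving $A_0$, and in particular by the whole affine action, with the norm bound $\|v\|\le\sup_u\|b(u)\|\le\|D\|$ coming from the moreover-clause of the theorem applied to $A_0\ni 0$.

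A fixed point $v$ means $u.v.u\inv + b(u) = v$ for every unitary $u$, i.e. $b(u)=v-u.v.u\inv$, which rearranges to $D(u)=v.u-u.v$ on unitaries after unwinding the normalisation. The final step is to promote this from unitaries to all of $A$: since $A$ is a unital C*-algebra, every element is a linear combination of (at most four) unitaries by the standard fact that self-adjoint contractions are averages of two unitaries, and both $D$ and the inner derivation $a\mapsto v.a-a.v$ are linear, so agreement on unitaries forces agreement on all of $A$. I expect the main obstacle to be purely bookkeeping rather than conceptual: verifying that the chosen normalisation of the cocycle $b$ is genuinely a cocycle for the \emph{left} affine isometric action in the precise form Theorem~\ref{thm:main} requires, and tracking signs and left-versus-right module conventions so that the extracted identity reads $D(a)=v.a-a.v$ rather than its negative or transpose. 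The isometry verification via duality of the $*$-automorphism is the only place where the von Neumann structure is essential, so I would make sure that step is airtight before the routine algebra.
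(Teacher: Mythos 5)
Your overall strategy is the one the paper uses: pass to the unitary group $G=\mathrm{U}(A)$, form the cocycle $b(u)=D(u).u\inv$ for the module structure $\xi\mapsto u.\xi.u\inv$, apply Theorem~\ref{thm:main} to get a fixed point $v$ with $\|v\|\le\|D\|$, and extend from unitaries to $A$ by writing every element as a combination of finitely many unitaries. However, there are two genuine problems. First, the corollary is stated for an \emph{arbitrary} derivation, with no continuity hypothesis, yet your boundedness estimate $\|b(u)\|\le\|D\|$ presupposes $\|D\|<\infty$. Without boundedness of $b(G)$ the theorem does not apply at all (it needs a bounded set $A_0$). The paper's first step is to invoke Ringrose's automatic continuity theorem for derivations from a C*-algebra into a Banach bimodule; this ingredient is missing from your argument and cannot be skipped.

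Second, your justification that $\xi\mapsto u.\xi.u\inv$ is an isometry of $M_*$ is incorrect as stated: you dualise it to ``the $*$-automorphism $x\mapsto uxu\inv$ of $M$,'' but the bimodule structure of $M_*$ over $A$ is abstract --- there is no assumption that $A$ sits inside $M$ or that the action is implemented by multiplication in $M$, so there is no such $*$-automorphism to appeal to. The correct (and much simpler) argument uses only the bimodule norm axiom: $\|u.\xi.u\inv\|\le\|u\|\,\|\xi\|\,\|u\inv\|=\|\xi\|$, and conversely $\|\xi\|=\|u\inv.(u.\xi.u\inv).u\|\le\|u.\xi.u\inv\|$. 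Relatedly, you claim the von Neumann structure enters only through this isometry verification; in fact it enters elsewhere and essentially: Theorem~\ref{thm:main} requires the target space to be L-embedded, and it is precisely the fact that $M_*$ is the predual of a von Neumann algebra (via the non-commutative Yosida--Hewitt decomposition) that guarantees this. The rest of your argument --- the cocycle identity, the unwinding $D(u)=v.u-u.v$, and the passage to all of $A$ by linearity --- is correct and matches the paper.
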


Haagerup's weak amenability of $A$ is given by the special case $M_*=A^*$. Our definition of Banach bimodule demands $\|a.v.b\| \leq \|a\|\cdot \|v\|\cdot\|b\|$ ($a,b\in A$, $v\in M_*$).

\begin{proof}[Proof of Corollary~\ref{cor:uffe}]
By Theorem~2 in~\cite{Ringrose72}, $D$ is continuous; thus it is bounded (by $\|D\|<\infty$) on the group $G$ of unitaries of $A$.  The map $G\to M_*$ given by $g\mapsto D(g).g\inv$ is a cocycle for the Banach $G$-module structure defined by the rule $v\mapsto g.v.g\inv$. Theorem~~\ref{thm:main} thus yields $v$, with norm bounded by $\|D\|$, such that $D(g)=v.g - g.v$ for all $g\in G$. The statement follows since any element of $A$ is a combination of four unitaries (in fact, three~\cite{Kadison-Pedersen85}).
\end{proof}

Finally, returning to the case $V=L^1$ of Theorem~\ref{thm:main}, we consider actions without \emph{a priori} boundedness of the orbits and obtain a new characterisation of Kazhdan groups:

\begin{corintro}\label{cor:T}
Let $\Omega$ by any measure space. Then any isometric action of a Kazhdan group on $L^1(\Omega)$ has a fixed point.

Moreover, this fixed point property characterises Kazhdan groups amongst all countable (or locally compact $\sigma$-compact) groups.
\end{corintro}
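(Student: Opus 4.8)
The plan is to derive both implications from Theorem~\ref{thm:main} together with the classical description of property~(T) in terms of conditionally negative definite functions. For the forward direction, let $G$ be a Kazhdan group acting isometrically on $V=L^1(\Omega)$, fix a basepoint $f_0\in V$, and write $b(g)=g.f_0-f_0$ for the associated cocycle. The crucial input is that the $L^1$-metric is of negative type: the kernel $(f,f')\mapsto\|f-f'\|_1$ is conditionally negative definite on $L^1(\Omega)$ (the case $p=1$ of Schoenberg's fact that $\|f-f'\|_p^p$ is conditionally negative definite for $0<p\le 2$). Since the linear part of the action is isometric, pulling this kernel back along $g\mapsto g.f_0$ gives a $G$-invariant negative-type kernel $(g,h)\mapsto\psi(g\inv h)$, so that $\psi(g)=\|b(g)\|_1$ is a conditionally negative definite function on $G$. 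Property~(T) forces every such function to be bounded, whence $\sup_g\|b(g)\|_1<\infty$; equivalently $A=\overline{G.f_0}$ is a bounded, $G$-invariant subset of $V$. Each $g\in G$ is then an isometry of $V$ preserving $A$, and Theorem~\ref{thm:main} produces a point of $V$ fixed by all of $G$.

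For the converse, suppose $G$ is countable (or locally compact $\sigma$-compact) and does \emph{not} have property~(T). By the Delorme--Guichardet theorem, $G$ admits an affine isometric action on a real Hilbert space $H$, with orthogonal linear part $\pi$ and cocycle $\beta:G\to H$ satisfying $\sup_g\|\beta(g)\|_H=\infty$. I would transfer this to $L^1$ by the Gaussian functor: let $(\Omega,\gamma)$ carry a centred Gaussian process $(W(x))_{x\in H}$ with $\mathbf{E}[W(x)W(y)]=\ip{x,y}$, so that $x\mapsto W(x)$ is a linear map $H\to L^1(\Omega,\gamma)$ with $\|W(x)\|_1=\sqrt{2/\pi}\,\|x\|_H$; after rescaling this is a linear isometric embedding $J:H\hookrightarrow L^1(\Omega,\gamma)$. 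The representation $\pi$ induces a measure-preserving $G$-action on $(\Omega,\gamma)$, hence an isometric linear action $\sigma$ on $L^1(\Omega,\gamma)$ intertwined by $J\pi(g)=\sigma(g)J$. Consequently $g\mapsto J(\beta(g))$ is a $\sigma$-cocycle of norm $\|\beta(g)\|_H$, so the resulting affine isometric action of $G$ on $L^1(\Omega,\gamma)$ has an unbounded cocycle and therefore no fixed point (a fixed point $v$ would give $\|b(g)\|\le 2\|v\|$). This exhibits a fixed-point-free isometric $L^1$-action and completes the characterisation.

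The main obstacle I anticipate lies in the converse: organising the transfers so that they are genuinely $G$-equivariant and land in a single $L^1$ space. One must verify that the Gaussian realisation $J$ intertwines $\pi$ with a measure-preserving (hence $L^1$-isometric) action and that the unbounded Hilbert cocycle survives as an unbounded $L^1$-cocycle. The forward direction is comparatively soft once the two classical facts — negative-type-ness of the $L^1$ metric and boundedness of conditionally negative definite functions under~(T) — are available; its only substantive step is the appeal to Theorem~\ref{thm:main}, which is precisely what removes any a priori boundedness hypothesis on the orbits. Finally I would remark that $\sigma$-compactness is used only in the converse, through the Delorme--Guichardet equivalence of~(T) with the fixed-point property~(FH); the forward implication needs no such restriction, matching the unconditional first assertion of the corollary.
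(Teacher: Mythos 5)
Your proof is correct. The forward direction is essentially the paper's argument in different clothing: the ``Fock space argument'' cited there from \cite[1.3(2)]{Bader-Furman-Gelander-Monod} is precisely the observation that $\|f-f'\|_1$ is a conditionally negative definite kernel, so that orbits of a Kazhdan group acting isometrically on $L^1$ are bounded and Theorem~\ref{thm:main} applies; note that Kazhdan groups are compactly generated, hence $\sigma$-compact, so the boundedness of (continuous) conditionally negative definite functions is available without further hypothesis. Your converse, however, departs genuinely from the paper. The paper argues that the fixed point property implies the $L^1$-analogue $(T_{L^1})$ of property~(T), and then invokes Theorem~A of \cite{Bader-Furman-Gelander-Monod} --- whose proof, stated there for $1<p<\infty$, must be checked to extend to $p=1$ via the Connes--Weiss construction \cite{Connes-Weiss} --- to conclude $(T)$. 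You instead argue contrapositively and concretely: from the failure of $(T)$ you extract, via Delorme--Guichardet, an affine isometric Hilbert space action with unbounded cocycle, and push it forward to $L^1$ by the Gaussian linear isometric embedding $J:H\to L^1(\Omega,\gamma)$, equivariant for the induced measure-preserving Gaussian action; the intertwining $J\pi(g)=\sigma(g)J$ and the preservation of the cocycle identity are immediate, and an unbounded cocycle rules out fixed points. What each approach buys: the paper's is shorter on the page but outsources the hard content to \cite{Bader-Furman-Gelander-Monod} and asks the reader to verify that the $L^p$ argument survives at $p=1$; yours is self-contained modulo Delorme--Guichardet and the Gaussian functor, produces an explicit fixed-point-free action, and bypasses the intermediate notion $(T_{L^1})$ altogether. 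In both treatments, $\sigma$-compactness (or countability) enters only in the converse, exactly as you note.
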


By the Kakutani representation theorem~\cite{Kakutani41b}, this corollary applies unchanged to abstract $L^1$ spaces, for instance to $M(X)$ for any locally compact space $X$.

\begin{proof}[Proof of Corollary~\ref{cor:T}]
Recall that any isometric action of a Kazhdan group on an $L^1$ space has bounded orbits because of a Fock space argument  (see e.g.~\cite[1.3(2)]{Bader-Furman-Gelander-Monod}). Therefore, Theorem~\ref{thm:main} implies the first part of the statement. Conversely, let $G$ be a locally compact $\sigma$-compact group with this fixed point property. A standard argument shows that $G$ has the $L ^1$-version $(T_{L^1})$ of property~$(T)$, see~\cite[1.3(1)]{Bader-Furman-Gelander-Monod} (this argument holds in the $\sigma$-compact generality). Theorem~A in~\cite{Bader-Furman-Gelander-Monod} shows that $(T_{L^1})$ implies $(T)$; although it is claimed there for $L^p$ with $1<p<\infty$, the proof applies unchanged to $L^1$, using the Connes--Weiss construction~\cite{Connes-Weiss} (exposed also in Theorem~6.3.4 of~\cite{Bekka-Harpe-Valette}).
\end{proof}

%===================================================================================================
%===================================================================================================
\section{Proof of the Theorem}

We first recall the concept of Chebyshev centre. Let $A$ by a non-empty bounded subset of a metric space $V$. The \emph{circumradius} of $A$ in $V$ is
$$\ro_V(A) = \inf\big\{ r\geq 0 : \exists\,x\in V \text{ with } A\se \bb(x, r) \big\},$$
where $\bb(x, r)$ denotes the closed $r$-ball around $x$. The \emph{Chebyshev centre} of $A$ in $V$ is the (possibly empty) set
$$C_V(A) = \big\{ c\in V : A\se \bb(c, \ro_V(A)) \big\}.$$
Notice that $C_V(A)$ can be written as an intersection of closed balls as follows:
$$C_V(A) = \bigcap_{r>\ro_V(A)} C_V^r(A) \kern3mm\text{where}\kern3mm C_V^r(A) = \bigcap_{a\in A} \bb(a, r).$$
Thus, when $V$ is a normed space, $C_V(A)$ is a bounded closed convex set. More importantly, when $V$ is a dual Banach space, we deduce from Alao\u glu's theorem that $C_V(A)$ is weak-* compact and that it is non-empty because the non-empty sets $C_V^r(A)$ are monotone in $r$. (For general Banach spaces, $C_V(A)$ is very often empty, even when $A$ consists of just three points~\cite{Konyagin88},\cite{Vesely01}.)

\begin{prop*}
Let $A$ be a non-empty bounded subset of an L-embedded Banach space $V$. Then the convex set $C_V(A)$ is weakly compact and non-empty.
\end{prop*}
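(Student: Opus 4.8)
The plan is to compute the Chebyshev centre in the bidual first, where the situation is already under control, and then to exploit the L-decomposition to push the answer back inside $V$. Since $V\bid$ is the dual of $V^*$, the discussion preceding the statement already shows that the Chebyshev centre $C_{V\bid}(A)$ of $A$ computed in $V\bid$ is non-empty and weak-* compact. I would then fix the L-projection $P\colon V\bid\to V$ with kernel $V_0$, so that each $\xi\in V\bid$ decomposes as $\xi = P\xi + (\xi - P\xi)$ along $V\bid = V\oplus_1 V_0$.

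The key step is an elementary consequence of the $\oplus_1$ norm. For $\xi\in V\bid$ and $a\in A\se V$, the vector $P\xi - a$ lies in $V$ while $\xi - P\xi$ lies in $V_0$, so
$$\|\xi - a\| = \|P\xi - a\| + \|\xi - P\xi\|.$$
Applying this to a point $\xi\in C_{V\bid}(A)$, every $a\in A$ satisfies $\|P\xi - a\|\le \ro_{V\bid}(A) - \|\xi - P\xi\|\le \ro_{V\bid}(A)$. Thus $P\xi\in V$ is a centre for $A$ of radius at most $\ro_{V\bid}(A)$, which yields $\ro_V(A)\le \ro_{V\bid}(A)$; the reverse inequality is immediate because $V\se V\bid$. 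Hence $\ro_V(A) = \ro_{V\bid}(A) =: \ro$.

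Feeding this equality back in, the estimate $\|P\xi - a\|\le \ro - \|\xi - P\xi\|$ exhibits a centre in $V$ of radius $\ro - \|\xi - P\xi\|$, which cannot be smaller than the circumradius $\ro$; therefore $\|\xi - P\xi\| = 0$, i.e. $\xi = P\xi\in V$. As $\xi\in C_{V\bid}(A)$ was arbitrary, this proves $C_{V\bid}(A)\se V$, and combined with $\ro_V(A)=\ro_{V\bid}(A)$ it gives $C_{V\bid}(A) = C_V(A)$. Consequently $C_V(A)$ is weak-* compact; since it is contained in $V$ and the weak topology of $V$ is the restriction to $V$ of the weak-* topology of $V\bid$, it is in fact weakly compact, and it is non-empty by the first part.

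I expect the main obstacle to be conceptual rather than computational: in a general Banach space $C_V(A)$ may well be empty, and $V$ is not weak-* closed in $V\bid$, so one cannot merely intersect the compact set $C_{V\bid}(A)$ with $V$ and hope for compactness or even non-emptiness. The L-embedding hypothesis is exactly what removes this obstacle, since the $\oplus_1$ splitting makes the L-projection distance-decreasing towards points of $V$; this both pins every optimal centre inside $V$ and forbids the circumradius from dropping upon passage to the bidual.
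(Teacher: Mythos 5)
Your argument is correct and is essentially the paper's own proof: both compute the Chebyshev centre in $V\bid$, use the identity $\|\xi-a\|=\|P\xi-a\|+\|\xi-P\xi\|$ coming from the $\oplus_1$ decomposition to show the circumradii in $V$ and $V\bid$ agree and that every centre in $V\bid$ already lies in $V$, and then transfer weak-* compactness to weak compactness. The only cosmetic difference is that you extract $\ro_V(A)\le\ro_{V\bid}(A)$ as an intermediate step before concluding $\xi=P\xi$, whereas the paper combines both conclusions in a single inequality.
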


\begin{proof}
Consider $A$ as a subset of $V\bid$ under the canonical embedding $V\se V\bid$. In view of the above discussion, $C_{V\bid}(A)$ is a non-empty weak-* compact convex set. We claim that it lies in $V$ and coincides with $C_V(A)$; the proposition then follows. Let thus $c\in C_{V\bid}(A)$ and write $c=c_V + c_{V_0}$ according to the decomposition $V\bid=V\oplus_1 V_0$. Then, for any $a\in A$, we have
$$\| a-c \| = \|a - c_V\| + \|c_{V_0}\|$$
since $A\se V$. Therefore,
$$\ro_{V\bid}(A) = \sup_{a\in A} \| a-c \| = \sup_{a\in A} \|a - c_V\| + \|c_{V_0}\|  \geq \ro_V(A) + \|c_{V_0}\|.$$
Since $\ro_{V\bid}(A) \leq \ro_V(A)$ anyway, we deduce $c_{V_0}=0$ and  $\ro_{V\bid}(A) =\ro_V(A)$, whence the claim.
\end{proof}

\begin{proof}[Proof of Theorem~\ref{thm:main}]
Since the definition of $C_V(A)$ is metric, it is preserved by any isometry preserving $A$. By the proposition, we can apply the Ryll-Nardzewski theorem and deduce that there is a point of $C_V(A)$ fixed by all isometries preserving $A$. The norm condition follows from the definition of centres.
\end{proof}

We remind the reader that in the present context the Ryll-Nardzewski theorem has a particularly short geometric proof relying on the dentability of weakly compact sets~\cite{Asplund-Namioka}.

%===================================================================================================
%===================================================================================================
\section{Comments}
\noindent
\textbf{a. }In marked contrast to classical fixed point theorems, there is no hope to find a fixed point inside a general bounded closed convex subset of $L ^1$, as pointed out in the opening. As a case in point, the weak compactness of the Ryll-Nardzewski theorem is a strong restriction in $L^1$ since it imposes equi-integrability, and yet it seems almost unavoidable in light of~\cite[Thm.~4.2]{Dominguez-Japon-Prus} if one insists on the classical statement.

\bigskip
\noindent
\textbf{b. }For the proposition, a canonical norm one projection $V\bid\to V$ is not enough. Indeed, any dual space is canonically $1$-complemented in its own bidual, but the fixed point property in all duals characterises amenability. Specifically, any non-amenable group $G$ has a fixed-point-free action with bounded orbits in $(\ell^\infty(G)/\RR)^*$.

\bigskip
\noindent
\textbf{c. }It would be interesting to find a purely geometric version of the proposition, since we prove compactness out of geometric assumptions. Notice however that the compact set $C_V(A)$ might still be large. If for instance $A$ consists of just the two points $0_{[0,1]}, 1_{[0,1]}$ in $V=L^1([0,1])$, then $C_V(A)$ is the infinite-dimensional set of functions $0\leq f\leq 1$ with $\int f=1/2$.
It would further be interesting to study the dynamics of the transformation $A\mapsto C_V(A)$; for instance, it can have orbits of period~$1$ or~$2$, but no other finite period.

\begin{ack*}
We are indebted to Andr\'es Navas for asking the question about $L^1$, motivated by~\cite{Coronel-Navas-Ponce}. We thank Uffe Haagerup for pointing out the application of the general result to weak amenability. Conversations with Narutaka Ozawa were very stimulating.
\end{ack*}

%======================================================================================================
%======================================================================================================
%\bibliographystyle{../BIB/abbrv}
%\bibliography{../BIB/ma_bib}

\end{document}